\setheadfoot{\onelineskip}{2\onelineskip} 
\DeclareMathAlphabet{\mathpzc}{OT1}{pzc}{m}{it}
\newcommand{\bigexists}{%
\mathop{\lower.9ex\hbox{%
   \scalebox{1.9}{\ensuremath{\exists}}}}\limits}
\DeclareFontFamily{U}{mathx}{\hyphenchar\font45}
\DeclareFontShape{U}{mathx}{m}{n}{
      <5> <6> <7> <8> <9> <10>
      <10.95> <12> <14.4> <17.28> <20.74> <24.88>
      mathx10
      }{}
\DeclareSymbolFont{mathx}{U}{mathx}{m}{n}
\DeclareMathAccent{\widecheck}{0}{mathx}{"71}
  \definecolor{darkblue}{rgb}{0,0,0.7} 
\newif\ifpgfshaperectangleroundnortheast
\newif\ifpgfshaperectangleroundnorthwest
\newif\ifpgfshaperectangleroundsoutheast
\newif\ifpgfshaperectangleroundsouthwest
\def\pgf@sh@bg@rectangle{%
  \pgfkeysgetvalue{/pgf/outer xsep}{\outerxsep}%
  \pgfkeysgetvalue{/pgf/outer ysep}{\outerysep}%
  \pgfpathmoveto{\pgfpointadd{\southwest}{\pgfpoint{\outerxsep}{\outerysep}}}%
  {\ifpgfshaperectangleroundnorthwest\else\pgfsetcornersarced{\pgfpointorigin}\fi%
    \pgfpathlineto{\pgfpointadd{\southwest\pgf@xa=\pgf@x\northeast\pgf@x=\pgf@xa}{\pgfpoint{\outerxsep}{-\outerysep}}}}%
  {\ifpgfshaperectangleroundnortheast\else\pgfsetcornersarced{\pgfpointorigin}\fi%
    \pgfpathlineto{\pgfpointadd{\northeast}{\pgfpoint{-\outerxsep}{-\outerysep}}}}%
  {\ifpgfshaperectangleroundsoutheast\else\pgfsetcornersarced{\pgfpointorigin}\fi%
    \pgfpathlineto{\pgfpointadd{\southwest\pgf@ya=\pgf@y\northeast\pgf@y=\pgf@ya}{\pgfpoint{-\outerxsep}{\outerysep}}}}%
  {\ifpgfshaperectangleroundsouthwest\else\pgfsetcornersarced{\pgfpointorigin}\fi%
    \pgfpathclose}}
\tikzset{
  	WD/.style={%
  	  label/.style={
      	font=\everymath\expandafter{\the\everymath\scriptstyle},
        inner sep=0pt,
        node distance=2pt and -2pt},
      semithick,
      node distance=\bbx and \bby,
      decoration={markings, mark=at position \stringdecpos with \stringdec},
    	bb port length=0,
  	  bb port sep=.5,
	 	  bbx = .4cm,
		  bb min width=.4cm,
	    bby = 2ex,
	    bb rounded corners=2pt,
	    dot size=3pt,
      shell size = 16pt,
      link size = 2pt,
      shell color = blue,
      surround sep=2pt,
      ar/.style={postaction={decorate}},
  		execute at begin picture={\tikzset{
  			x=\bbx, y=\bby, 
				circuit logic US, tiny circuit symbols
				}
			}
    },
    bbx/.store in=\bbx,
    bby/.store in=\bby,
    bb port sep/.store in=\bbportsep,
    bb port length/.store in=\bbportlen,
    bb min width/.store in=\bbminwidth,
    bb rounded corners/.store in=\bbcorners,
    bb/.code 2 args={%
    	\pgfmathsetlengthmacro{\bbheight}{\bbportsep * (max(#1,#2)+1) * \bby}
      \pgfkeysalso{draw,minimum height=\bbheight,minimum
       width=\bbminwidth,outer sep=0pt,
         rounded corners=\bbcorners,thick,
         prefix after command={\pgfextra{\let\fixname\tikzlastnode}},
         append after command={\pgfextra{\draw
            \ifnum #1=0{} \else foreach \i in {1,...,#1} {
            	($(\fixname.north west)!{(2*\i-1)/(2*#1)}!(\fixname.south west)$) -- +(-\bbportlen,0) coordinate (\fixname_in\i) 
						}\fi 
            \ifnum #2=0{} \else foreach \i in {1,...,#2} {
            	($(\fixname.north east)!{(2*\i-1)/(2*#2)}!(\fixname.south east)$) -- +(\bbportlen,0) coordinate (\fixname_out\i)}\fi;
           }}}
		},
	dot size/.store in=\dotsize,
	dot/.style={
		circle, draw, thick, inner sep=0, fill=black, minimum width=\dotsize
	},
	shell size/.store in=\psize,
  spacing/.store in=\spacing,
  link size/.store in=\lsize,
  shell color/.store in=\pcolor,
 	shell inside color/.store in=\picolor,
  shell inside color=blue!20,
 	shell outside color/.store in=\pocolor,
  shell outside color=blue!50!black,
 	surround sep/.store in=\ssep,
 	link/.style={
  	circle, 
  	draw=black, 
  	fill=black,
  	inner sep=0pt, 
 		minimum size=\lsize
 	},
  shell/.style={
 		circle, 
 		draw = \pocolor, 
  	fill = \picolor,
  	minimum size = \psize
  },
  func/.style={
  	shell,
		rectangle,
		rounded corners=.5*\psize,
		inner ysep=.125*\psize,
		minimum width=1.125*\psize,
		inner xsep=.25*\psize,
  },
  funcr/.style={
    func,
    rectangle round north west=false, 
		rectangle round south west=false,
  },
  funcl/.style={
    func,
		rectangle round north east=false, 
		rectangle round south east=false,
  },
  funcu/.style={
    func,
		rectangle round south east=false, 
		rectangle round south west=false,
  },
  funcd/.style={
    func,
		rectangle round north east=false, 
		rectangle round north west=false,
  },
  outer shell/.style={
 		ellipse, 
 		draw,
  	inner sep=\ssep,
  	color=gray,
 	},
  intermediate shell/.style={
 		ellipse,
 		dashed, 
  	draw,
  	inner sep=\ssep,
 		color=\pocolor,
 	},
 }
\tikzset{light gray nodes/.style={every node/.style={fill=gray!40}}}
  \setlist{noitemsep, nolistsep}
	\setlist[description]{leftmargin=0em, itemindent=2em}
\theoremstyle{plain}
\newtheorem{theorem}{Theorem}[chapter] 
\newtheorem{proposition}[theorem]{Proposition}
\theoremstyle{definition}
\newtheorem{definition}[theorem]{Definition}
\newtheorem{notation}[theorem]{Notation}
\newtheorem*{axiom*}{Axiom}
\theoremstyle{remark}
\newtheorem{example}[theorem]{Example}
\newtheorem{remark}[theorem]{Remark}
\newcommand{\Set}[1]{\mathrm{#1}}
\newcommand{\cat}[1]{\mathcal{#1}}
\newcommand{\Cat}[1]{{\mathsf{#1}}}
\newcommand{\Funr}[1]{\mathsf{#1}}
\DeclareMathOperator{\ob}{\Set{Ob}}
\DeclareMathOperator{\Hom}{Hom}
\DeclareMathOperator{\id}{id}
\newcommand{\finset}{\Cat{FinSet}}
\newcommand{\smset}{\Cat{Set}}
\newcommand{\smcat}{\Cat{Cat}}
\newcommand{\gr}[1]{\Cat{Gr}(#1)}
\newcommand{\grc}[1]{\Cat{Gr}^\mathrm{o}(#1)}
\newcommand{\pb}[1][very near start]{\ar[dr, phantom, #1, "\lrcorner"]}
\newcommand{\rr}{{\mathbb{R}}}
\newcommand{\imp}{\Rightarrow}
\newcommand{\op}{^\mathrm{op}}
\newcommand{\po}{^\mathrm{p}}
\newcommand{\shp}{^\sharp}
\newcommand{\cp}{\mathbin{\fatsemi}}
\newcommand{\qqand}{\qquad\text{and}\qquad}
\newcommand{\lens}{\Cat{Lens}}
\newcommand{\lo}[2]{\begin{psmallmatrix}#1\\#2\end{psmallmatrix}}
\newcommand{\comon}{\Cat{Comon}}
\newcommand{\slice}{\Cat{Slice}}
\newcommand{\coslice}{\Cat{Coslice}}
\newcommand{\coKl}{\Funr{coKl}}
\newcommand{\Top}{\Cat{Top}}
\begin{document}   

\title{Generalized Lens Categories via Functors $\cat{C}\op\to\smcat$}

\author{David I.\ Spivak\thanks{This work supported by Honeywell Inc.\ as well as AFOSR grants FA9550-17-1-0058 and FA9550-19-1-0113.}}
\date{\vspace{-.3in}}

\maketitle

\begin{abstract}
Lenses have a rich history and have recently received a great deal of attention from applied category theorists. We generalize the notion of lens by defining a category $\lens_F$ for any category $\cat{C}$ and functor $F\colon\cat{C}\op\to\smcat$, i.e.\ an indexed category, using a variant of the Grothendieck construction. All of the mathematics in this note is straightforward; the purpose is simply to see lenses in a broader context where some closely-related examples, such as ringed spaces and open continuous dynamical systems, can be included.
\end{abstract}

\chapter{Introduction}

Roughly speaking, a \emph{lens} is bi-directional map $\lo{\mathrm{get}}{\mathrm{put}}\colon\lo{c}{x}\to\lo{d}{y}$ between pairs; the two parts have the following form:
\begin{equation}\label{eqn.basic_form}
	\mathrm{get}\colon c\to d
	\qqand
	\mathrm{put}\colon c\times y\to x.
\end{equation}
Lenses have recently received a great deal of attention from applied category theorists. One reason is that they show up in many disparate places, such as database updates, learning algorithms, open games, open dynamical systems, and wiring diagrams. Lenses have been broadly generalized to so-called profunctor optics; see e.g.\ \cite{riley2018categories}. 

We will discuss what seems to be a completely different direction of generalization: we associate a notion of \emph{generalized lens category} to an arbitrary indexed category. Namely for every category $\cat{C}$ and (pseudo-) functor $F\colon \cat{C}\op\to\smcat$, we define a category $\lens_F$ using a variant of the Grothendieck construction. The idea is that a morphism in the Grothendieck construction consists of two parts, which turn out to be the above get and put maps from lens theory. Taking $\cat{C}=\smset$, we can recover the usual category of lenses in a couple of ways (see \cref{ex.dependent_lens,prop.recover_usual}). The most basic is to take $F$ to be the slice category functor $\slice\colon\smset\op\to\smcat$, embed each $\lo{c}{x}$ as the projection $\pi_c\colon c\times x\to c$ in $\slice(c)$, and note that for any choice of function $\mathrm{get}\colon c\to d$, the square shown below is a pullback:
\begin{equation}\label{eqn.lens_via_pb}
\begin{tikzcd}
	c\times x\ar[dr, bend right, "\pi_c"']&
	c\times y\ar[r]\ar[d]\pb\ar[l, "\mathrm{put}"']&
	d\times y\ar[d, "\pi_{d}"]\\&
	c\ar[r, "\mathrm{get}"']&
	d
\end{tikzcd}
\end{equation}
The function indicated as $\mathrm{put}$ does not have quite the same form as in \cref{eqn.basic_form}: there is an extra factor of $c$ in the codomain. However, to be a morphism in the slice category $\slice(c)$, such a function $c\times y\to c\times x$ in $\slice(c)$ must commute with the projections. Thus it has no choice with regards to the $c$-factor in the codomain, and hence the only remaining choice is that of a function $\mathrm{put}\colon c\times y\to x$, thus recovering the notion of morphism $\lo{c}{x}\to\lo{d}{y}$ from \cref{eqn.basic_form}. This idea, to think of the an object $\lo{c}{x}$ not as a simple pair but as a \emph{dependent} pair ($x$ dependent on $c$), is the main thrust of this note.

All of the results we discuss here are straightforward to prove. The proposed contribution is to provide a setting in which open continuous dynamical systems, ringed spaces, and dependent lenses---none of which fit in with the usual definition of lens but all of which seem to be quite similar to it in spirit---can be included in the theory. We also provide a construction of lenses in an arbitrary symmetric monoidal category, which is known but seems not to have been written down explicitly before.

\paragraph{Acknowledgements.}
Thanks to Brendan Fong, Bruno Gavranovi\'{c}, David A.\ Dalrymple, Sophie Libkind, Eliana Lorch, and Toby St Clere Smithe for inspiring conversations. Special thanks to David Jaz Myers for \cref{sec.lenses_SMC}---which is entirely due to him---as well as other useful comments and suggestions, and to Christina Vasilakopoulou for the observation that wiring diagrams in \cite{Vagner.Spivak.Lerman:2015a,schultz2016dynamical} are prisms (\cref{ex.prisms_wds}). Thanks also to Bruno Gavranovi\'{c} and Sophie Libkind for a careful reading, and to Jules Hedges for several references and helpful comments.

\chapter{Lenses}

Lenses have been studied in computer science and discussed category-theoretically for several decades \cite{oles1983category,dePaiva1989dialectica,bohannon2006relational, diskin2008algebraic, johnson2012lenses}. There are several variants, and the naming is often inconsistent. A good summary can be found in this \href{https://julesh.com/2018/08/16/lenses-for-philosophers/}{blog post} by Jules Hedges; see also the \href{http://hackage.haskell.org/package/lens}{Haskell Lens library}. We will be discussing what Hedges calls bimorphic lenses in \cite{hedges2017coherence}, but we will refer to them simply as \emph{lenses}. We begin by recalling this notion.

\section{Lenses in finite product categories}

We begin with lenses in a category $\cat{C}$ with finite products; one may think $\cat{C}=\smset$. In \cref{sec.lenses_SMC} we generalize to an arbitrary symmetric monoidal category, and then much further in \cref{chap.generalized_lenses}.

\begin{notation}
We denote the composite of $f\colon c\to d$ and $g\colon d\to e$ by $(f\cp g)\colon c\to e$. We denote the identity morphism on an object $c$ either by $\id_c$ or simply by $c$. We denote the hom-set from $c$ to $d$ in a category $\cat{C}$ either by $\Hom(c,d)$, $\Hom_{\cat{C}}(c,d)$, or $\cat{C}(c,d)$.
\end{notation}

\begin{definition}[Lenses in finite product categories]\label{def.lens_fpcat}
Let $\cat{C}$ be a category with finite products. The \emph{category of $\cat{C}$-lenses}, denoted $\lens_{\cat{C},\times}$ has as objects pairs $\lo{c}{x}$, where $c,x\in\cat{C}$; given another such object $\lo{d}{y}$ we define the homset
\[
	\lens_{\cat{C},\times}\left(\lo{c}{x},\lo{d}{y}\right)\coloneqq
	\left\{\lo{f}{f\shp}\;\middle|\; f\colon c\to d\text{ and }f\shp\colon c\times y\to x\right\}.
\]
We refer to the $\cat{C}$-morphisms $f\colon c\to d$ as the \emph{get part} and $f\shp\colon c\times y\to x$ as the the \emph{put part} of the \emph{lens} $\lo{f}{f\shp}$.

The identity on $\lo{c}{x}$ is $\lo{\id_c}{\epsilon_c\times \id_x}=\lo{c}{\epsilon_c\times x}$, where $\epsilon_c\colon c\to 1$ is the terminal map. The composite of $\lo{f}{f\shp}\colon\lo{c}{x}\to\lo{d}{y}$ and $\lo{g}{g\shp}\colon\lo{d}{y}\to\lo{e}{z}$ is $\lo{f\cp g}{(\delta_c\times z)\cp(c\times f\times z)\cp(c\times g\shp)\cp f\shp}$, where $\delta_c\colon c\to c\times c$ is the diagonal. In pictures, it is given by the following string diagram:
\begin{equation}\label{eqn.string}
\begin{tikzpicture}[WD, bb port length=5pt, baseline=(f)]
	\node[bb={1}{1}] (f) {$f$};
	\node[bb={1}{1}, right=1.5 of f] (g) {$g$};
	\node[left=0 of f_in1] {$c$};
	\node[right=0 of g_out1] {$e$};
	\draw (f_out1) to node[above, font=\scriptsize] {$d$} (g_in1);
	\node[dot, right=6 of g] (dot) {};
	\draw (dot.180) to +(180:5pt) coordinate (dot_in1) node[left] {$c$};
	\node[bb={1}{1}] at ($(dot)+(1.5,-.5)$) (f) {$f$};
	\node[bb={2}{1}, below right=-1 and 1.5 of f] (g') {$g\shp$};
	\draw (dot) to[out=-60, in=180] (f_in1);
	\draw (f_out1) to node[above, font=\scriptsize] {$d$} (g'_in1);
	\node[bb={2}{1}, above right=-1 and 1.5 of g'] (f') {$f\shp$};
	\draw (dot) to [out=60, in=180] (f'_in1);
	\draw (g'_out1) to[out=0, in=180] node[below, font=\scriptsize] {$y$} (f'_in2);
	\draw (g'_in2) to (g'_in2-|dot_in1) node[left] {$z$};
	\node[right=0 of f'_out1] {$x$};
\end{tikzpicture}
\end{equation}
\end{definition}

\begin{example}[Simple lenses]\label{ex.simple}
In functional programming the emphasis has often been on what we call \emph{simple lenses} \cite{bohannon2006relational}, which are lenses of the form $\lo{c}{c}$. A morphism of simple lenses $\lo{c}{c}\to\lo{d}{d}$ consists of a function $f\colon c\to d$ and a function $f\shp\colon c\times d\to c$.
\end{example}

\begin{example}[Prisms, wiring diagrams]\label{ex.prisms_wds}
If $\cat{C}$ has finite coproducts then $\cat{C}\op$ has finite products. The category $(\lens_{\cat{C}\op,+})\op$ is called the category of \emph{prisms} in $\cat{C}$. A prism $\lo{c}{x}\to\lo{d}{y}$ consists of a pair of morphisms $d\to c$ and morphism $x\to c+y$ in $\cat{C}$.

The category $(\lens_{\finset\op,+})\op$ of prisms in $\finset$, or more generally replacing $\finset$ by $\finset/T$ for some set $T$, is called the category of wiring diagrams in \cite{Vagner.Spivak.Lerman:2015a}. When $T=1$ this forms the left class of a factorization system on the category $\Cat{Cob}$ of 1-dimensional oriented cobordisms, and similarly for arbitrary $T$ (using cobordisms with components labeled in $T$). See \cite{abadi2015existence}.
\end{example}

\begin{example}[Moore machines]
Given a pair of sets $(A,B)$, a Moore machine (also called an open discrete dynamical system in \cite{Spivak:2015a}) consists of a set $S$ and two functions $f^{\mathrm{out}}\colon S\to B$ and $f^{\mathrm{upd}}\colon A\times S\to S$. This is the same as a lens $\lo{S}{S}\to\lo{A}{B}$.

The notion of dynamical system (and the formula for composing them with wiring diagrams as in \cref{ex.prisms_wds}) can be generalized to the continuous case, with manifolds replacing the sets and systems of ordinary differential equations replacing the update functions. However, the theory of lenses does not accommodate this generalization. Remedying this lack was in part the motivation for the present note; see \cref{ex.cts_dyn}.
\end{example}

\section{Lenses in symmetric monoidal categories}\label{sec.lenses_SMC}

We owe the ideas of this section to David Jaz Myers, though these ideas are apparently folklore. For something similar, see \cite[Section 2.2]{abou2016reflections}.

\begin{definition}[Commutative comonoid]\label{def.comonoid}
Let $(\cat{C},I,\otimes,\sigma)$ be a symmetric monoidal category. A \emph{commutative comonoid in $\cat{C}$} consists of a tuple $(c,\epsilon,\delta)$ where $c\in\cat{C}$, $\epsilon\colon c\to I$ and $\delta\colon c\to c\otimes c)$ satisfy the axioms:
\begin{enumerate}
	\item $\delta_c\cp(c\otimes\epsilon_c)=c$,
	\item $\delta_c\cp\sigma_{c,c}=\delta_c$ , and 
	\item $\delta_c\cp(\delta_c\otimes c)=\delta_c\cp(c\otimes\delta_c)$.
\end{enumerate}
We refer to $\epsilon$ as the \emph{counit} and $\delta$ as the \emph{comultiplication}. We sometimes write $c$ to denote the comonoid, leaving $\epsilon$ and $\delta$ implicit.

A \emph{morphism} of commutative comonoids $(c,\epsilon_c,\delta_c)\to(d,\epsilon_{d},\delta_{d})$ is a morphism $f\colon c\to d$ in $\cat{C}$ such that $\epsilon_c=f\cp\epsilon_{d}$ and $\delta_c\cp(f\otimes f)=f\cp\delta_{d}$. We denote the category of commutative comonoids and their morphisms by $\comon_{\cat{C}}$.
\end{definition}

\begin{proposition}[Finite product categories]\label{prop.fpcats_comonoids}
If $\cat{C}$ has finite products and $(I,\otimes)$ is the corresponding (``Cartesian'') monoidal structure, then there is an isomorphism of categories $\cat{C}\cong\comon_{\cat{C}}$.
\end{proposition}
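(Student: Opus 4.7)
The plan is to exhibit the inverse equivalence explicitly, and show that in the Cartesian setting the forgetful functor $U\colon\comon_{\cat{C}}\to\cat{C}$ is an isomorphism on the nose. First I would define a functor $G\colon\cat{C}\to\comon_{\cat{C}}$ that equips every object $c$ with its canonical comonoid structure: take $\epsilon_c\colon c\to 1$ to be the unique map to the terminal object $I=1$, and take $\delta_c\colon c\to c\times c$ to be the diagonal $\langle \id_c,\id_c\rangle$. The three comonoid axioms are immediate from the universal property of products (counit = projection after diagonal, cocommutativity = symmetry of the diagonal, coassociativity = both iterates equal $\langle\id_c,\id_c,\id_c\rangle$). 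On morphisms, $G$ acts as the identity; one checks that any $f\colon c\to d$ satisfies the two comonoid morphism axioms: $f\cp\epsilon_d=\epsilon_c$ holds because both are maps to the terminal object, and $\delta_c\cp(f\otimes f)=f\cp\delta_d$ holds since both sides equal $\langle f,f\rangle\colon c\to d\times d$ by the universal property of $d\times d$.

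The heart of the proof — and the only step that requires any thought — is showing that every commutative comonoid structure on an object $c$ equals the canonical one. So suppose $(c,\epsilon,\delta)$ is an arbitrary commutative comonoid. Since $I=1$ is terminal, $\epsilon=\epsilon_c$ automatically. For $\delta$, the counit axiom $\delta\cp(c\otimes\epsilon)=\id_c$ unpacks (after identifying $c\otimes 1\cong c$) to $\pi_1\cp\delta=\id_c$, where $\pi_1\colon c\times c\to c$ is the first projection. Applying cocommutativity $\delta\cp\sigma_{c,c}=\delta$ and composing with $\pi_1$ gives $\pi_2\cp\delta=\pi_1\cp\sigma_{c,c}\cp\delta=\pi_1\cp\delta=\id_c$. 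So both components of $\delta\colon c\to c\times c$ are the identity, hence by the universal property of the product $\delta=\langle\id_c,\id_c\rangle=\delta_c$. Thus the object-assignment of $U$ is injective on comonoid structures.

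With uniqueness of structure in hand, the compositions $U\cp G=\id_{\cat{C}}$ and $G\cp U=\id_{\comon_{\cat{C}}}$ are immediate: on objects, uniqueness shows $G(U(c,\epsilon,\delta))=(c,\epsilon_c,\delta_c)=(c,\epsilon,\delta)$; on morphisms, both $G$ and $U$ are the identity on underlying arrows of $\cat{C}$. Functoriality of $G$ is also automatic since it is the identity on hom-sets (and every $\cat{C}$-morphism really does lie in $\comon_{\cat{C}}$, as verified above). The main obstacle — if there is one — is just the calculation that cocommutativity together with the single-sided unit axiom forces both projections of $\delta$ to be identities; once that is seen, the rest is bookkeeping.
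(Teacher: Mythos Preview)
Your argument is correct. The paper itself gives no proof at all: it simply cites Fox's 1976 paper \emph{Coalgebras and cartesian categories}, which is the standard reference for the fact that in a cartesian monoidal category every object carries a unique commutative comonoid structure and every morphism is automatically a comonoid morphism. So you have supplied, explicitly, precisely the argument that the paper defers to the literature; there is nothing to compare in terms of strategy.

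One minor notational point worth cleaning up: midway through the uniqueness step you silently slip from the paper's diagrammatic composition order (in which $\delta\cp(c\otimes\epsilon)$ means ``apply $\delta$, then $c\otimes\epsilon$'') to the conventional order when you write $\pi_1\cp\delta=\id_c$ and $\pi_2\cp\delta=\pi_1\cp\sigma_{c,c}\cp\delta$. In the paper's convention these should read $\delta\cp\pi_1=\id_c$ and $\delta\cp\pi_2=\delta\cp\sigma_{c,c}\cp\pi_1$. The mathematics is unaffected, but the notation should be made consistent.
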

\begin{proof}
See \cite{fox1976coalgebras}.
\end{proof}

The following is straightforward.
\begin{proposition}
There is a symmetric monoidal structure on $\comon_{\cat{C}}$ such that the functor $\comon_{\cat{C}}\to\cat{C}$ is strict monoidal.
\end{proposition}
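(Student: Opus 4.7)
The plan is to lift the symmetric monoidal structure on $\cat{C}$ to $\comon_{\cat{C}}$ in such a way that the forgetful functor $U\colon\comon_{\cat{C}}\to\cat{C}$, which sends $(c,\epsilon_c,\delta_c)$ to $c$, acts as the identity on all of the structure data. First I would define the tensor product of comonoids $(c,\epsilon_c,\delta_c)$ and $(d,\epsilon_d,\delta_d)$ to have underlying object $c\otimes d$, counit $\epsilon_c\otimes\epsilon_d$ (composed with the unitor $I\otimes I\iso I$), and comultiplication
\[
\delta_{c\otimes d}\coloneqq(\delta_c\otimes\delta_d)\cp\tau_{c,d},
\]
where $\tau_{c,d}\colon(c\otimes c)\otimes(d\otimes d)\to(c\otimes d)\otimes(c\otimes d)$ is the canonical ``interchange'' isomorphism built from associators and the braiding $\sigma_{c,d}$. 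The unit object is $I$ with counit $\id_I$ and comultiplication the inverse unitor $I\iso I\otimes I$.

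Next I would verify the three comonoid axioms from \cref{def.comonoid} for $(c\otimes d,\epsilon_{c\otimes d},\delta_{c\otimes d})$: each one reduces to the corresponding axiom for $c$ and $d$ separately after sliding $\tau_{c,d}$ past the $\otimes$-factored $\delta$'s and $\epsilon$'s using naturality of the associators, unitors, and braiding of $\cat{C}$. On morphisms, given comonoid maps $f\colon c\to c'$ and $g\colon d\to d'$, I would take $f\otimes g$ as the tensor in $\comon_{\cat{C}}$ and check the counit and comultiplication compatibilities, both of which follow from the corresponding conditions on $f$ and $g$ together with functoriality of $\otimes$ and naturality of $\sigma$.

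Then I would show that the associators, unitors, and braiding of $\cat{C}$ are already comonoid morphisms between the appropriate iterated tensor products; each verification is once again a naturality and coherence chase. Because the forgetful $U$ is faithful, the pentagon, triangle, hexagon, and symmetry axioms for $\comon_{\cat{C}}$ reduce to the corresponding axioms in $\cat{C}$, which hold by hypothesis. By construction, the unit, tensor product, and coherence morphisms of $\comon_{\cat{C}}$ have the same underlying data as those of $\cat{C}$, so $U$ is strict symmetric monoidal.

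The hard part will be purely book-keeping: the verifications involving $\tau_{c,d}$ stack several copies of $\sigma$ and $\alpha$, and it is easy to misplace a factor. I would defuse this by drawing string diagrams in the style of \cref{def.lens_fpcat}, where $\tau_{c,d}$ becomes a visible crossing of strands and the comonoid axioms become manifest topological symmetries of the resulting pictures; the symmetric monoidal coherence theorem then guarantees that any two choices of canonical iso used along the way agree.
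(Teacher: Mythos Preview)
Your proposal is correct and follows the standard construction. The paper itself gives no proof of this proposition at all---it merely remarks beforehand that ``the following is straightforward''---so your argument supplies exactly the routine verification the paper omits.
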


\begin{definition}\label{def.lens_smc}
Let $(\cat{C},I,\otimes,\sigma)$ be a symmetric monoidal category. The \emph{category of $\cat{C}$-lenses}, denoted $\lens_{\cat{C},\otimes}$ has as objects pairs $\lo{c}{x}$, where $c\in\comon_{\cat{C}}$ and $x\in\cat{C}$. Given another such object $\lo{d}{y}$ we define the homset
\[
	\lens_{\cat{C},\otimes}\left(\lo{c}{x},\lo{d}{y}\right)\coloneqq
	\left\{\lo{f}{f\shp}\;\middle|\; f\in\comon_{\cat{C}}(c, d)\text{ and }f\shp\in\cat{C}(c\otimes y, x)\right\}.
\]
We refer to the comonoid morphism $f\colon c\to d$ as the \emph{get part} and the map $f\shp\colon c\otimes y\to d$ as the \emph{put part} of the \emph{lens} $\lo{f}{f\shp}$.

The identity on $\lo{c}{x}$ is $\lo{c}{\epsilon_c\otimes x}$, where $\epsilon_c\colon c\to 1$ is the counit. The composite of $\lo{f}{f\shp}$ and $\lo{g}{g\shp}\colon\lo{d}{y}\to\lo{e}{z}$ is $\lo{f\cp g}{(\delta_c\otimes z)\cp(c\otimes f\otimes z)\cp(c\otimes g\shp)\cp f\shp}$, where $\delta_c\colon c\to c\otimes c$ is the comultiplication. The string diagram for the composite is identical to that in \cref{eqn.string}.
\end{definition}

\cref{def.lens_smc} generalizes \cref{def.lens_fpcat}, by \cref{prop.fpcats_comonoids}.

\chapter{Generalized lens categories}\label{chap.generalized_lenses}

We define the lens category $\lens_F$ for any functor $F\colon\cat{C}\op\to\smcat$ and then give several examples.

\section{Definition and examples of $\lens_F$}

The Grothendieck construction comes in two variants.

\begin{definition}[Grothendieck constructions]\label{def.grothendieck}
Let $\cat{C}$ be a category and $F\colon\cat{C}\to\smcat$. The \emph{covariant Grothendieck construction of $F$} consists of a category $\gr{F}$ and a functor $\pi_F\colon\gr{F}\to\cat{C}$, defined as follows.
\begin{align*}
	\ob(\gr{F})&
	\coloneqq
	\bigsqcup_{c\in\ob(\cat{C})}\ob(F(c))\\
	\gr{F}\big((c,x),(d,y)\big)&
	\coloneqq
	\bigsqcup_{f\in\cat{C}(c,d)}\Hom_{F(d)}\big(F(f)(x),y\big)
\end{align*}
That is, an object in $\gr{F}$ is a pair $(c,x)$ where $c\in\cat{C}$ and $x\in F(c)$. A morphism $(c,x)\to (d,y)$ is a pair $(f,f\shp)$ where $f\colon c\to d$ and $f\shp\colon F(f)(x)\to y$ is a morphism in the category $F(d)$. The identity on $(c,x)$ is $(\id_c,\id_x)$, and the composite of $(f,f\shp)$ and $(g,g\shp)$ is given by
\[(f,f\shp)\cp (g,g\shp)\coloneqq \left((f\cp g), \big(F(g)(f\shp)\cp g\shp\big)\right).\]
The functor $\pi_F\colon\gr{F}\to\cat{C}$ sends $(c,x)\mapsto c$ and $(f,f\shp)\mapsto f$.

Given a functor $F\colon\cat{C}\op\to\smcat$, the \emph{contravariant Grothendieck construction of $F$} consists of a category $\grc{F}$ and a functor $\pi_F\colon\grc{F}\to\cat{C}$, defined as follows:
\begin{align*}
	\ob(\grc{F})&
	\coloneqq
	\bigsqcup_{c\in\ob(\cat{C})}\ob(F(c))\\
	\grc{F}\big((c,x),(d,y)\big)&
	\coloneqq
	\bigsqcup_{f\in\cat{C}(c,d)}\Hom_{F(c)}\big(x,F(f)(y)\big)
\end{align*}
Identities, composition, and the functor $\pi_F$ are defined analogously.
\end{definition}

For any functor $F\colon\cat{C}\op\to\smcat$, let $F\po\colon\cat{C}\op\to\smcat$ denote its pointwise opposite, $F\po(c)\coloneqq F(c)\op$. The following is straightforward.

\begin{proposition}\label{prop.tfae}
Let $F\colon\cat{C}\op\to\smcat$ be a functor, and let $F\po\colon\cat{C}\op\to\smcat$ be its pointwise opposite. The following three categories are naturally isomorphic:
\begin{enumerate}
	\item $\gr{F}\op$, the opposite of the covariant Grothendieck construction of $F$,
	\item $\grc{F\po}$, the contravariant Grothendieck construction of the pointwise opposite of $F$,
	\item the analogous category with objects $\bigsqcup_{c\in\ob(\cat{C})}\ob(F(c))$ and morphisms
	\begin{equation}\label{eqn.lens_explicit}
		\Hom((c,x),(d,y))\coloneqq\bigsqcup_{f\in\cat{C}(c,d)}\Hom_{F(c)}\big(F(f)(y),x\big).
	\end{equation}
\end{enumerate}
Moreover, these isomorphisms commute with the functors $\pi_F$ to $\cat{C}$.
\end{proposition}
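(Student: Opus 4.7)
The plan is to show that all three categories have literally the same object set and to exhibit natural bijections between their hom-sets that are visibly compatible with identities, composition, and the projection to $\cat{C}$. Since the proposition says "naturally isomorphic" and the author calls it straightforward, this will really be bookkeeping: once the hom-sets are identified, everything else follows by inspection.

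First I would note that by \cref{def.grothendieck}, each of $\gr{F}$, $\grc{F\po}$, and the category in item (3) has object set $\bigsqcup_{c\in\ob(\cat{C})}\ob(F(c))$, so I can take the object-level isomorphism to be the identity. Next I would unpack the hom-sets. For item (1): a morphism $(d,y)\to(c,x)$ in $\gr{F}$ (for $F\colon\cat{C}\op\to\smcat$, viewed as a covariant functor out of $\cat{C}\op$) is a pair $(f,f\shp)$ with $f\colon d\to c$ in $\cat{C}\op$, i.e.\ $f\colon c\to d$ in $\cat{C}$, together with $f\shp\colon F(f)(y)\to x$ in $F(c)$; taking the opposite of the category reverses the direction of this pair to give exactly the hom-set in \cref{eqn.lens_explicit}. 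For item (2): a morphism $(c,x)\to(d,y)$ in $\grc{F\po}$ is a pair $(f,f\shp)$ with $f\colon c\to d$ in $\cat{C}$ and $f\shp\colon x\to F\po(f)(y)$ in $F\po(c)=F(c)\op$; but a morphism $x\to F(f)(y)$ in $F(c)\op$ is by definition a morphism $F(f)(y)\to x$ in $F(c)$, again matching \cref{eqn.lens_explicit}.

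Finally I would verify that identities and composition transport correctly. The identity on $(c,x)$ in both constructions is $(\id_c,\id_x)$, which matches the proposed identity on the item (3) side. For composition the main point to unwind is the covariant formula $(f,f\shp)\cp(g,g\shp) = (f\cp g,\, F(g)(f\shp)\cp g\shp)$ from \cref{def.grothendieck}; running this through the opposite construction of item (1) and the pointwise-opposite construction of item (2) yields in both cases the rule sending a pair of composable morphisms $(c,x)\To{(f,f\shp)}(d,y)\To{(g,g\shp)}(e,z)$ in the item (3) description to $(f\cp g,\, f\shp\cp F(f)(g\shp))$ in $F(c)$, where I use functoriality of $F(f)$ and the fact that reversing arrows in $F(c)$ swaps the order of composition. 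Commutativity with $\pi_F$ is immediate, since the projection in all three categories is the identity on the $\cat{C}$-component $c$. The only mild obstacle is keeping track of which arrows live in which opposite category while verifying the composition formula; since all the data is symbolic and $F$ is a (pseudo-)functor, this amounts to routine substitution rather than any genuine argument.
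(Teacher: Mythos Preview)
Your approach is correct and is exactly the routine unpacking the paper intends: the paper gives no proof at all, merely prefacing the proposition with ``The following is straightforward.'' Your plan of identifying objects and hom-sets and then checking identities and composition is the only reasonable thing to do here, and your identification of the hom-sets in items (1) and (2) with \cref{eqn.lens_explicit} is correct.

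One small slip: in your composition formula for item (3) you write $(f\cp g,\, f\shp\cp F(f)(g\shp))$, but in the paper's diagrammatic convention for $\cp$ this does not typecheck. With $f\shp\colon F(f)(y)\to x$ and $g\shp\colon F(g)(z)\to y$ in $F(d)$, applying $F(f)$ gives $F(f)(g\shp)\colon F(f)F(g)(z)\to F(f)(y)$ in $F(c)$, so the composite must be written $F(f)(g\shp)\cp f\shp$. This is purely a notational reversal and does not affect the argument.
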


%
\begin{definition}[$F$-lenses]\label{def.F_lens}
Let $F\colon\cat{C}\op\to\smcat$ be a functor. Define the \emph{category of $F$-lenses}, denoted $\lens_F$, to be any of the three isomorphic categories from \cref{prop.tfae}. We refer to $\cat{C}$ as the \emph{get-category} and to $\pi_F\colon\lens_F\to\cat{C}$ as the \emph{get-functor}.

We denote the object $(c,x)$ by $\lo{c}{x}$. From the explicit formula \eqref{eqn.lens_explicit}, we see that a morphism $\lo{f}{f\shp}\colon\lo{c}{x}\to\lo{d}{y}$ consists of a pair $(f,f\shp)$ where $f\colon c\to d$ is in $\cat{C}$ and $f\shp\colon F(f)(y)\to x$ is a morphism in the category $F(c)$.
\end{definition}

With this definition, lenses have a diagrammatically simpler form than in \cref{eqn.string}. Here we show morphisms $\lo{f}{f\shp}\colon\lo{c}{x}\to\lo{d}{y}$ and $\lo{g}{g\shp}\colon\lo{d}{y}\to\lo{e}{z}$:
\[
\begin{tikzpicture}
	\node (P11) {
	\begin{tikzpicture}[WD, bb port length=.3cm]
		\node[bb={1}{1}] (f) {$f$};
  	\node[shell, right=.5cm of f] (y) {$y$};
		\draw (f_out1) -- (y);
		\node[above=0 of f_in1, font=\scriptsize] {$c$};
		\node[above=0 of f_out1, font=\scriptsize] {$d$};
	\end{tikzpicture}
	};
	\node (P12) [right=1 of P11] {
	\begin{tikzpicture}[WD, bb port length=.3cm]
		\node[shell] (x) {$x$};
		\draw (x.west) -- +(-.3cm, 0) node[above, font=\scriptsize] {$c$};
	\end{tikzpicture}
	};
	\node at ($(P11.east)!.5!(P12.west)$) {$\xRightarrow{f\shp}$};
	\node (P21) [right=1 of P12] {
	\begin{tikzpicture}[WD, bb port length=.3cm]
		\node[bb={1}{1}] (g) {$g$};
  	\node[shell, right=.6cm of g] (z) {$z$};
		\draw (g_out1) -- (z);
		\node[above=0 of g_in1, font=\scriptsize] {$d$};
		\node[above=0 of g_out1, font=\scriptsize] {$e$};
	\end{tikzpicture}
	};
	\node (P22) [right=1 of P21] {
	\begin{tikzpicture}[WD, bb port length=.3cm]
		\node[shell] (y) {$y$};
		\draw (y.west) -- +(-.3cm, 0) node[above, font=\scriptsize] {$d$};
	\end{tikzpicture}
	};
	\node at ($(P21.east)!.5!(P22.west)$) {$\xRightarrow{g\shp}$};
\end{tikzpicture}
\]
The wires represent objects $c,d,e\in\cat{C}$, and white the squares represent morphisms in $\cat{C}$. The blue circles and double-arrows represent objects and morphisms in the categories $F(c)$, etc. Here is a picture of the composite $\lo{f}{f\shp}\cp\lo{g}{g\shp}$:
\[
	\begin{tikzpicture}
		\node (P31) {
  	\begin{tikzpicture}[WD, bb port length=.3cm]
  		\node[bb={1}{1}] (f) {$f$};
  		\node[bb={1}{1}, right=.6cm of f] (g) {$g$};
    	\node[shell, right=.6cm of g] (z) {$z$};
  		\draw (g_out1) -- (z);
  		\node[above=0 of g_in1, font=\scriptsize] {$d$};
  		\node[above=0 of g_out1, font=\scriptsize] {$e$};
  		\draw (f_out1) -- (y);
  		\node[above=0 of f_in1, font=\scriptsize] {$c$};
  	\end{tikzpicture}	
  	};
		\node (P32) [right=1 of P31] {
  	\begin{tikzpicture}[WD, bb port length=.3cm]
  		\node[bb={1}{1}] (f) {$f$};
    	\node[shell, right=.5cm of f] (y) {$y$};
  		\draw (f_out1) -- (y);
  		\node[above=0 of f_in1, font=\scriptsize] {$c$};
  		\node[above=0 of f_out1, font=\scriptsize] {$d$};
  	\end{tikzpicture}
  	};
		\node (P33) [right=1 of P32] {
  	\begin{tikzpicture}[WD, bb port length=.3cm]
  		\node[shell] (x) {$x$};
  		\draw (x.west) -- +(-.3cm, 0) node[above, font=\scriptsize] {$c$};
  	\end{tikzpicture}
		};
  	\node at ($(P31.east)!.5!(P32.west)$) {$\xRightarrow{g\shp}$};
  	\node at ($(P32.east)!.5!(P33.west)$) {$\xRightarrow{f\shp}$};
	\end{tikzpicture}
\]
One may also imagine these morphisms logically, e.g.\ the implications
\[\forall c\ldotp y(f(c))\Rightarrow x(c)\qquad \forall d\ldotp z(g(d))\Rightarrow y(d)\]
which can be combined to obtain $\forall c\ldotp z(f(g(c)))\imp x(c)$.

\begin{remark}\label{rem.why_not_po}
The Grothendieck construction of a $\smcat$-valued functor $F\colon\cat{C}\op\to\smcat$ always yields a (split) fibration over $\cat{C}$ and vice versa, so generalized lens categories can be viewed simply as split fibrations. However we chose $\lens_F$ to be the fiberwise opposite of $\gr{F}$---rather than replacing $F$ with $F\po$ at the outset---for two reasons. First, in cases of interest $F$ seems to be simpler to specify than $F\po$. Second, the form of \eqref{eqn.lens_explicit} is the one that is most familiar in lens theory.
\end{remark}

\begin{example}[Dependent lenses]\label{ex.dependent_lens}
Let $\cat{C}$ be a category with pullbacks. There is a functor $\slice\colon\cat{C}\op\to\smcat$ given on an object $c$ by the slice category $\slice(c)\coloneqq\cat{C}/c$ over $c$, and on morphisms $f\colon c\leadsto b$ in $\cat{C}\op$ by pullback. That is, for every object $p\colon x\to c$ in $\slice(c)$ we obtain an object $\slice(f)(p)\in\cat{C}/b$ using the following pullback diagram in $\cat{C}$:
\[
\begin{tikzcd}
	b\times_cx\ar[r]\ar[d, "\slice(f)(p)"']\pb&
	x\ar[d, "p"]\\
	b\ar[r, "f"']&
	c
\end{tikzcd}
\]
This extends to morphisms in $\slice(c)$ using the universal property of pullbacks.

The category $\lens_{\slice}$ has as objects pairs $\lo{c}{p}$ where $p\colon x\to c$, and as morphisms pairs $\lo{f}{f\shp}$ where $f\colon c\to d$ and $f\shp\colon c\times_{d} y\to x$. We can think of objects in $\lens_{\slice}$ as dependent lenses; for example if $\cat{C}=\smset$, then each object $p\colon x\to c$ may assign non-isomorphic fibers to different elements of $c$.

Note that we can find the category of lenses from \cref{def.lens_fpcat} inside of $\lens_F$. Indeed, it is isomorphic to the full subcategory spanned by all pairs $\lo{c}{\pi}$ for which $\pi\colon c\times x\to c$ is the projection for some $x\in\cat{C}$. This was discussed in the introduction around \cref{eqn.lens_via_pb}. We will recover the category of lenses in a completely different way in \cref{prop.recover_usual}.

More importantly, the get functor $\pi_F\colon\lens_{\slice}\to\cat{C}$ is not only a fibration but a bifibration. Indeed, the functor $\slice(f)\colon\slice(d)\to\slice(c)$ has a left adjoint $\Sigma_f\colon\slice(c)\to\slice(d)$, which one may call the \emph{dependent sum along $f$}, for any morphism $f\colon c\to d$ in $\cat{C}$. The name ``dependent lens'' is actually most appropriate when $\cat{C}$ not only has pullbacks but is locally cartesian closed. This simply means that each $\slice(f)$ additionally has a right adjoint $\Pi_f\colon\slice(c)\to\slice(d)$, called the \emph{dependent product along $f$}. In this case $\pi_F$ is a trifibration.
\end{example}

\begin{example}[Open continuous dynamical systems]\label{ex.cts_dyn}
Recall that a differentiable manifold $M$ has a tangent bundle $TM$ and a submersion $\pi_M\colon TM\to M$. Given a pair of manifolds $(A,B)$, \cite{Vagner.Spivak.Lerman:2015a} defines an open continuous dynamical system with inputs $A$ and outputs $B$ to be a manifold $S$ (called the state space), a differentiable map $f^{\text{out}}\colon S\to B$, and a differentiable map $f^{\text{dyn}}\colon A\times S\to TS$ such that $f^{\text{dyn}}\cp\pi_S=\pi_2$. We can see this as a morphism in a generalized lens category as follows.

Consider the functor $\Funr{Subm}\colon\Cat{Mfd}\op\to\smcat$ sending each manifold $M$ to the category of submersions over $M$, and sending a differentiable map $f\colon M\to N$ to the pullback functor along $f$. Then an open continuous dynamical system with inputs $A$ and outputs $B$ consists of a morphism $\lo{S}{TS}\to\lo{B}{\pi_B}$ in $\lens_{\Funr{Subm}}$, where $\pi_B\colon A\times B\to B$ is the projection. \cite{Vagner.Spivak.Lerman:2015a} shows that continuous dynamical systems can be wired together using prisms, as in \cref{ex.prisms_wds}.
\end{example}

\begin{example}[Cotangent bundles]
Consider again the lens category for the functor $\Funr{Subm}\colon\Cat{Mfd}\op\to\smcat$, as in \cref{ex.cts_dyn}. Whereas in that example we considered objects given by tangent bundles and found that certain morphisms between them were continuous dynamical systems, here we consider objects given by cotangent bundles and find that we get lenses between them canonically, without making choices.

To begin, note that a differentiable manifold $M$ also has a cotangent bundle $T^*M$; its fiber over each point $m\in M$ is the dual to the tangent space there, i.e.\ it is the vector space $T^*_mM\coloneqq\Cat{Vect}(T_mM,\rr)$ of linear maps from the tangent space to the ground field $\rr$. Given a differentiable function $f\colon M\to N$, the derivative (Jacobian) defines a map $Tf\colon TM\to TN$, in particular over each point $m\in M$ a linear transformation $T_mM\to T_{f(m)}N$. This in turn induces a linear transformation $\Cat{Vect}(T_{f(m)}N,\rr)\to\Cat{Vect}(T_mM,\rr)$ for each point $m$, and these assemble into a morphism $f^\sharp\colon f^*(T^*N)\to T^*M$ of bundles over $M$. All together we have canonically obtained a morphism $\lo{f}{f^\sharp}\colon\lo{M}{T^*M}\to\lo{N}{T^*N}$ in the lens category $\lens_{\Funr{Subm}}$, and hence a functor $\Cat{Mfd}\to\lens_{\Funr{Subm}}$.
\end{example}

\begin{example}[Ringed spaces]\label{ex.ringed_sp}
The category of ringed spaces from algebraic geometry is an example of a generalized lens category. There is a functor $\Funr{Sh}\colon\Top\op\to\smcat$, where $\Top$ is the category of topological spaces and $\Funr{Sh}(X)$ is the category of sheaves of rings on $X$; given a map $f\colon X\to Y$ in $\Top$, there is a functor $f^*$ which sends a sheaf on $Y$ to a sheaf on $X$, hence defining $\Funr{Sh}$ on morphisms.

The category $\lens_{\Funr{Sh}}$ of $\Funr{Sh}$-lenses has as objects pairs $\lo{X}{\cat{O}_X}$ where $\cat{O}_X$ is a sheaf of rings on $X$. A morphism $\lo{X}{\cat{O}_X}\to\lo{Y}{\cat{O}_Y}$ is a pair $(f,f\shp)$ where $f\colon X\to Y$ is a map of topological spaces and $f\shp\colon f^*\cat{O}_Y\to\cat{O}_X$ is a map of sheaves of rings.
\end{example}

\begin{example}[Twisted arrow category]
Let $\cat{C}$ be a category and consider the coslice functor $\coslice\colon\cat{C}\op\to\smcat$ sending $c\mapsto c/\cat{C}$; on morphisms $f\colon d\to c$ the functor $\coslice(f)$ sends an object $x\colon c\to d\in\coslice(c)$ to the composite $f\cp x$. The category $\lens_\coslice$ is the twisted arrow category $\Cat{tw}(\cat{C})$ of $\cat{C}$. An object $\lo{c}{x}$ in $\lens_\coslice$ is a morphism $x\colon c\to d$, and a morphism $\lo{c}{x}\to\lo{d}{y}$ in $\lens_\coslice$ is a twisted square
\[
\begin{tikzcd}
	c\ar[r, "f"]\ar[d, "x"' pos=.4]&
	d\ar[d, "y" pos=.3]\\
	d&
	d'\ar[l, "f\shp"]
\end{tikzcd}
\]
\end{example}

\section{Lenses in symmetric monoidal categories}\label{sec.lens_smc_v2}

Let $(\cat{C},I,\otimes)$ be a symmetric monoidal category, and let $\comon_{\cat{C}}$ denote its (symmetric monoidal) category of commutative comonoids (see \cref{def.comonoid}). For each object $c\in\comon_{\cat{C}}$ there is a comonad on $\cat{C}$ given by $x\mapsto c\otimes x$; the counit is given by $\epsilon_c\otimes x$ and the comultiplication is given by $\delta_c\otimes x$, maps which are natural in $x$. Forming the coKleisli category gives a functor $\coKl_{\cat{C}}\colon\comon_{\cat{C}}\op\to\smcat$. Let's unpack this.

The functor $\coKl_{\cat{C}}\colon\comon_{\cat{C}}\op\to\smcat$ has the following more explicit formulation. Given an commutative comonoid $(c,\epsilon,\delta)$, the coKleisli category $\coKl_{\cat{C}}(c)$ has objects $\ob(\cat{C})$ and morphisms $\Hom(x,y)\coloneqq\cat{C}(c\otimes x,y)$. The identity on $x$ is given by $(\epsilon_c\otimes x)\colon c\otimes x\to x$, and the composite of $f\colon c\otimes x\to y$ and $g\colon c\otimes y\to z$ is given by
$(f\cp g)=(\delta_c\otimes x)\cp(c\otimes f)\cp g$. In pictures:
\[
\begin{tikzpicture}[WD, bb port length=5pt]
	\node[dot] (dot) {};
	\draw (dot.180) -- +(180:5pt) coordinate (dot_in1);
	\node[bb={2}{1}, below right=0 and 1.5 of dot] (f) {$f$};
	\draw (dot) to[out=-60, in=180] (f_in1);
	\node[bb={2}{1}, above right=-1 and 1.5 of f] (g) {$g$};
	\draw (dot) to [out=60, in=180] (g_in1);
	\draw (f_out1) to[out=0, in=180] (g_in2);
	\draw (f_in2) -- (f_in2-|dot_in1);
\end{tikzpicture}
\]
Given a morphism of comonoids $p\colon c\to d$ in $\comon_{\cat{C}}$, we obtain an identity-on-objects functor $\coKl_{\cat{C}}(p)\colon \coKl_{\cat{C}}(d)\to \coKl_{\cat{C}}(c)$ that sends the morphism $d\otimes x\to y$ to its composite with $(p\otimes x)\colon(c\otimes x)\to (d\otimes x)$.

\begin{proposition}\label{prop.recover_usual}
Let $\cat{C}$ be symmetric monoidal and let $\coKl\colon\comon_{\cat{C}}\op\to\smcat$ be as in \cref{sec.lens_smc_v2}. The generalized lens category $\lens_{\coKl}$ is isomorphic to the category $\lens_{\cat{C},\otimes}$ from \cref{def.lens_smc}.
\end{proposition}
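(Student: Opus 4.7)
The plan is to exhibit an identity-on-objects isomorphism $\Phi\colon\lens_{\cat{C},\otimes}\to\lens_{\coKl}$ by unpacking the explicit formula \eqref{eqn.lens_explicit} for $F=\coKl_{\cat{C}}$ and matching it against \cref{def.lens_smc}. Objects in both categories are pairs $\lo{c}{x}$ with $c\in\comon_{\cat{C}}$ and $x\in\cat{C}$, so $\Phi$ is the identity on objects. The only actual content is matching the homsets and then checking that identities and composition agree on the two sides.

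For morphisms, I would unfold \eqref{eqn.lens_explicit}: a morphism $\lo{c}{x}\to\lo{d}{y}$ in $\lens_{\coKl}$ consists of a comonoid morphism $f\colon c\to d$ together with a morphism $f\shp\colon\coKl_{\cat{C}}(f)(y)\to x$ in $\coKl_{\cat{C}}(c)$. Since $\coKl_{\cat{C}}(f)$ is identity-on-objects, $\coKl_{\cat{C}}(f)(y)=y$, and a morphism $y\to x$ in $\coKl_{\cat{C}}(c)$ is by definition a $\cat{C}$-morphism $c\otimes y\to x$. This is exactly the data of a morphism in $\lens_{\cat{C},\otimes}$, giving a bijection on homsets compatible with $\Phi$. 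For identities, the identity on $\lo{c}{x}$ in $\lens_{\coKl}$ is $(\id_c,\id_x)$, where $\id_x$ is the identity at $x$ in $\coKl_{\cat{C}}(c)$, namely $\epsilon_c\otimes x$; this agrees with $\lo{c}{\epsilon_c\otimes x}$ from \cref{def.lens_smc}.

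The main step is composition. Given $\lo{f}{f\shp}\colon\lo{c}{x}\to\lo{d}{y}$ and $\lo{g}{g\shp}\colon\lo{d}{y}\to\lo{e}{z}$, the composite in $\lens_{\coKl}$ has get part $f\cp g$ and put part $\coKl_{\cat{C}}(f)(g\shp)\cp f\shp$ computed in $\coKl_{\cat{C}}(c)$. By the description of $\coKl_{\cat{C}}$ just before the statement, $\coKl_{\cat{C}}(f)(g\shp)$ is the $\coKl_{\cat{C}}(c)$-morphism $z\to y$ represented by $(f\otimes z)\cp g\shp\colon c\otimes z\to y$; composing with $f\shp\colon y\to x$ in $\coKl_{\cat{C}}(c)$ then introduces a copy via $\delta_c$, producing $(\delta_c\otimes z)\cp(c\otimes f\otimes z)\cp(c\otimes g\shp)\cp f\shp$, which is exactly the formula in \cref{def.lens_smc}. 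There is no serious obstacle; the only care required is to track the two layers of composition (the Grothendieck-style reindexing by $\coKl_{\cat{C}}(f)$ and the coKleisli composition using $\delta_c$) and see that together they collapse to the single formula of \cref{def.lens_smc}, a fact that the string diagram \eqref{eqn.string} makes visually transparent.
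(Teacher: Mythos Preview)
Your proposal is correct and follows exactly the same route as the paper's proof: identify objects, unfold the homsets via \eqref{eqn.lens_explicit} using that $\coKl_{\cat{C}}(f)$ is identity-on-objects, and then check identities and composition. In fact you go slightly further than the paper, which simply asserts ``one may check that the identities and composition formulas also coincide,'' whereas you explicitly carry out the coKleisli computation and recover the formula $(\delta_c\otimes z)\cp(c\otimes f\otimes z)\cp(c\otimes g\shp)\cp f\shp$ from \cref{def.lens_smc}.
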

\begin{proof}
The objects of both $\lens_{\coKl}$ and $\lens_{\cat{C},\otimes}$ are pairs $\lo{c}{x}$, where $c\in\comon_{\cat{C}}$ and $x\in\ob(\coKl(c))=\ob\cat{C}$.

The morphisms $\lo{c}{x}\to\lo{d}{y}$ in the latter are pairs $(f,f\shp)$ where $f\colon c\to d$ is a morphism of comonoids and $f\shp\colon c\otimes y\to x$ is any morphism. In the former, the morphisms are pairs $(f,f\shp)$ where $f\colon c\to d$ is a map of comonoids and $f\shp\colon\coKl(f)(y)\to x$ is a map in $\coKl(c)$. Since $\coKl(f)$ is identity on objects, $f\shp$ is a morphism $c\otimes y\to x$ in $\cat{C}$, so again the morphisms in the two categories coincide. One may check that the identities and composition formulas also coincide.
\end{proof}

\section{When $F$ is monoidal, so is $\lens_F$}

\begin{definition}[Monoidal Grothendieck construction]
Let $(\cat{C},I,\otimes)$ be a monoidal category and $(F,\varphi)\colon\cat{C}\to\smcat$ a lax monoidal functor. The monoidal Grothendieck construction \cite{moeller2018monoidal} returns a monoidal structure on the category $\gr{F}$, and hence on $\lens_F=\gr{F}\op$, by $\lo{c}{x}\otimes \lo{d}{y}\coloneqq\lo{c\otimes d}{\varphi(x,y)}$.
\end{definition}

\begin{example}
The functors $\coKl_{\cat{C}}\colon\cat{C}\to\smcat$ for arbitrary symmetric monoidal $\cat{C}$ from \cref{sec.lens_smc_v2}, as well as $\Funr{Subm}\colon\Cat{Mfd}\op\to\smcat$ and 
 $\Funr{Sh}\colon\Top\op\to\smcat$ from \cref{ex.cts_dyn,ex.ringed_sp} are all lax monoidal. Thus each of the lens categories $\lens_{\coKl}$, $\lens_{\Funr{Subm}}$, and $\lens_{\Funr{Sh}}$, inherit symmetric monoidal structures. From the first and third examples we recover the usual monoidal structure on the usual category of lenses in a symmetric monoidal category, as well as that on ringed spaces.
\end{example}

\printbibliography

\end{document}